\def\visible<#1>{}  
\newcommand\ifpdf
\DeclareMathOperator    \intr                   {int}
\newcommand{\old}[1]{{}}
\newcommand{\bb}{\mathbb}
\newcommand{\R}{\bb R}
\newcommand{\Z}{\bb Z}
\newcommand{\setcond}[2]{\left\{\, #1 : #2 \,\right\}}
\renewcommand{\P}{\mathcal{P}}
\newenvironment{psmallmatrixbig}{\bigl(\smallmatrix}{\endsmallmatrix\bigr)}
\newcommand\InlineFrac[2]{#1/#2}  
\newcommand\ColVec[3][\relax]
\let\frac=\InlineFrac\begin{psmallmatrixbig}#2\vphantom{/}\\#3\vphantom{/}\end{psmallmatrixbig}\egroup
\let\frac=\InlineFrac\begin{psmallmatrixbig}\ifx#200\else#2/#1\fi\\\ifx#300\else#3/#1\fi\end{psmallmatrixbig}\egroup
\newtheorem{theorem}{Theorem}[section]
\newcommand\MkNewTheorem[2]{%
  \newtheorem{#1}{#2}[section]
  \expandafter\def\csname c@#1\endcsname{\c@theorem}
  \expandafter\def\csname p@#1\endcsname{\p@theorem}
  \expandafter\def\csname the#1\endcsname{\thetheorem}
  \expandafter\def\csname #1name\endcsname{#2}
}
\theoremstyle{definition}
\let\OurMathBbAux=\mathbb
\DeclareRobustCommand\OurMathBb{\OurMathBbAux}
\let\mathbb=\OurMathBb
\let\bfseries=\undefined
\DeclareRobustCommand\bfseries
\let\OurMathBbAux=\mathbf}
\def\@thm#1#2#3{%
  \ifhmode\unskip\unskip\par\fi
  \normalfont
  \trivlist
  \let\thmheadnl\relax
  \let\thm@swap\@gobble
  \thm@notefont{\fontseries\mddefault\upshape\unboldmath}
  \thm@headpunct{.}
  \thm@headsep 5\p@ plus\p@ minus\p@\relax
  \thm@space@setup
  #1
  \@topsep \thm@preskip               
  \@topsepadd \thm@postskip           
  \def\@tempa{#2}\ifx\@empty\@tempa
    \def\@tempa{\@oparg{\@begintheorem{#3}{}}[]}%
  \else
    \refstepcounter{#2}%
    \def\@tempa{\@oparg{\@begintheorem{#3}{\csname the#2\endcsname}}[]}%
  \fi
  \@tempa
}
\renewcommand{\pod}[1]
{\allowbreak\mathchoice{\mkern18mu}{\mkern8mu}{\mkern8mu}{\mkern8mu}(#1)}
\chardef\Myunderscore=`\_
  \def\Myunderscore{\textunderscore}%
\newcommand\underscore{\Myunderscore\allowbreak}
\DeclareRobustCommand\sage[1]{\texttt{#1}}
\DeclareRobustCommand\sagefunc[1]{\pgfkeys{/sagefunc/#1}}
  \newcommand\CompendiumGraphics[1]{\includegraphics[width=.8\linewidth]{#1}}
  \newcommand\CompendiumGridEntry[1]{\CompendiumGraphics{#1}\par{\tiny\sagefunc{#1}\par}\vspace*{-3ex}}
\title[An electronic compendium of extreme functions]{An electronic compendium\\
  of extreme functions
  for the 
  \\
  Gomory--Johnson
  infinite group problem}
\thanks{The authors gratefully acknowledge partial support from the National Science
  Foundation through grant DMS-1320051 awarded to M.~K\"oppe.}
\author{Matthias K\"oppe}
\address{Matthias K\"oppe: Dept.\ of Mathematics, University of California, Davis}
\email{mkoeppe@math.ucdavis.edu}
\author{Yuan Zhou} 
\address{Yuan Zhou: Dept.\ of Mathematics, University of California, Davis}
\email{yzh@math.ucdavis.edu}
\date{2015-05-21, \textit{Revision}: 1810}
\begin{document}

\begin{abstract}
  In this note we announce the availability of an electronic compendium of
  extreme functions for Gomory--Johnson's infinite group problem.  These
  functions serve as the strongest cut-generating functions for integer linear
  optimization problems.  We also close several gaps in the literature.
\end{abstract}

\maketitle
\insert\footins{
  \normalfont\footnotesize
  \interlinepenalty\interfootnotelinepenalty
  \splittopskip\footnotesep \splitmaxdepth \dp\strutbox
  \floatingpenalty10000 \hsize\columnwidth
  This is a post-print (accepted manuscript) version of this paper, 
  which has been published in Operations Research Letters \textbf{43} (2015),
  no.~4, 438--444, and is available from \url{https://doi.org/10.1016/j.orl.2015.06.004}.
  \doclicenseThis\par}


\section{Introduction}

The infinite group problem was introduced 42 years ago by Ralph Gomory and
Ellis Johnson in their groundbreaking papers titled \emph{Some continuous
  functions related to corner polyhedra I, II} \cite{infinite,infinite2}.  The
technique, investigating strong relaxations of integer linear programs by
convexity in a function space, has often been dismissed as ``esoteric.''
Now we recognize the infinite group problem as a technique which was
decades ahead of its time, and which may be the key to today's
pressing need for stronger, multi-row cutting plane approaches.  

In this note, however, we restrict ourselves to the single-row (or,
``one-dimensional'') infinite group problem, which has attracted most of the
attention in the past.  It can be written as
\begin{equation}
  \label{GP} 
  \begin{aligned}
    &\sum_{r \in \R} r\, y(r) \equiv f \pmod{1}, \\
    &y\colon \R\to\Z_+ \text{ is a function of finite support}, 
  \end{aligned}
\end{equation}
where $f$ is a given element of $\R\setminus \Z$. 
We study the convex hull $R_{f}(\R,\Z)$ of
the set of all functions $y\colon \R \to \Z_+$ satisfying the constraints
in~\eqref{GP}.  The elements of the convex hull are understood as functions
$y\colon \R \to \R_+$. 

After a normalization, valid inequalities for the convex set $R_{f}(\R,\Z)$
can be described using so-called \emph{valid functions} $\pi\colon \R\to\R$
via $$\langle \pi, y \rangle := \sum_{r \in \R} \pi(r)y(r) \geq 1.$$  In the
finite-dimensional case, instead of merely valid inequalities, one is
interested in stronger inequalities such as tight valid inequalities and
facet-defining inequalities.  These r\^oles are taken in our
infinite-dimensional setting by \emph{minimal functions} and \emph{extreme
  functions}.  Minimal functions are those valid functions that are pointwise
minimal; extreme functions are those that are not a proper convex combination
of other valid functions. 

By a theorem of Gomory and
Johnson~\cite{infinite}, minimal functions for $R_f(\R,\Z)$ are classified:  
They are exactly the subadditive functions $\pi\colon \R\to\R_+$ that are
periodic modulo~$1$ and satisfy the \emph{symmetry condition} $\pi(x) +
\pi(f - x) = 1$ for all $x\in\R$.  
A major goal of research is to obtain a
classification, or at least improved understanding, of the extreme functions as
well. 

We refer the interested reader to the recent surveys
\cite{corner_survey,Richard-Dey-2010:50-year-survey} and the forthcoming
survey \cite{igp_survey} for a more detailed exposition.

\medbreak

The main purpose of this note is to announce the availability of an Electronic
Compendium of Extreme Functions \cite{electronic-compendium}, implemented in
Python within the framework of the open-source computer algebra system Sage \cite{sage}.
We hope that it will facilitate new research on the infinite group problem, in
particular by enabling computational experiments, including those that would
investigate the strength of these functions to generate the coefficients of
cutting planes in a branch and cut algorithm. To our knowledge, such experiments have not
been conducted systematically with all known families of extreme functions.

The survey \cite{Richard-Dey-2010:50-year-survey} provided an excellent
service by organizing the extreme functions known at that time; yet our
knowledge has grown since then, and our Electronic Compendium provides for the
first time convenient and up-to-date access to the definitions of the
functions, bringing light to those functions hidden in obscurity.

The extremality proof for a given class of piecewise minimal valid functions
follows a standard pattern, which we illustrate with the proof of extremality
for a class of functions that was described in the literature but whose
extremality was unknown before (\autoref{s:dpl1_extreme}).  

This standard proof pattern can actually be fully automated for a given
piecewise minimal valid function with rational
data~\cite{basu-hildebrand-koeppe:equivariant}; see also
\cite[\autoref{survey:sec:one-two-dim}]{igp_survey}.  The Compendium is
released as part of software \cite{infinite-group-relaxation-code} that
implements this automated extremality test and thus enables computational
experiments to find new extreme functions, to make conjectures, and to verify
claims in the literature.  As an example regarding the latter, we highlight
the case of a family of piecewise linear functions with $3$~slopes studied by
Chen~\cite{chen}.  Chen's extremality proof is flawed, and using our software we
can easily determine the non-extremality of these functions
(\autoref{s:chen_3_slope_not_extreme}).

All definitions of extreme functions in the Compendium are provided with unit
tests using this extremality-testing algorithm, which help to ensure the
correctness of the compendium as we continue to add newly discovered functions
or generalize their constructions.

Some classes of piecewise linear minimal valid functions have historically
been proved extreme using a different type of proof, either by the connection
to a sequence of finite group problems, or a lemma regarding limits of
sequences of extreme functions.  We may now consider these proofs obsolete, as
they can be replaced by proofs following the standard pattern.  We illustrate
this by an example in~\autoref{s:new_proof_drlm_backward_3_slope}.

The only example known in the literature whose extremality proof does not
follow the standard pattern and cannot be automated is a family of non--piecewise
linear, measurable functions introduced by Basu et al.~\cite{bccz08222222} as a
counterexample to a conjecture by Gomory--Johnson.  Basu et al.'s functions
are uniform limits of certain 2-slope functions, which we identify as
\sagefunc{kf_n_step_mir}\footnote{Throughout this paper, we refer to an extreme function
  or a family of extreme functions by the name of the Sage function that
  constructs them; these names are shown in typewriter font.} 
functions.  The limit functions can be seen as
absolutely continuous, measurable, non--piecewise linear generalizations of 2-slope functions. 
In \autoref{s:bccz_counterexample}, we contribute a discussion of a limiting case of the same construction, in
which one of the two slopes disappears, giving a continuous (but not
absolutely continuous), measurable, non--piecewise 
linear ``1-slope function.'' It can be seen as a periodic, subadditive
version of Cantor's devil function.

At the end of the article (\autoref{s:usage}) we explain how to use the
Electronic Compendium and show an overview of the functions in it at the time
of writing of this article (\autoref{tab:compendium}).

\section{Extremality of Richard et al.'s $\mathrm{DPL}_1$-extreme function
  \sage{rlm\underscore{}dpl1\underscore{}extreme\underscore{}3a}}
\label{s:dpl1_extreme}

Richard et al.~\cite{Richard-Li-Miller-2009:Approximate-Liftings} study
families of piecewise linear minimal valid functions with a prescribed set of
breakpoints.\footnote{They present these results within their theory of
  approximate lifting, where pseudo-periodic superadditive functions appear.
  However, these can be transformed, via
  \cite[Proposition~18]{Richard-Li-Miller-2009:Approximate-Liftings}, to the
  standard 
  setting of minimal valid functions for the infinite group problem, which are
  periodic and subadditive.  We describe all functions after applying this
  transformation.}  Specifically, $\mathrm{DPL}_n$ functions are possibly
discontinuous piecewise linear functions with $2n+1$ subintervals in $[0,1]$, 
whose graphs include the closed line segment from $\ColVec{0}{0}$ to $\ColVec{f}{1}$.  They
are parametrized by a finite number of real values: the slopes of the
subintervals (whose lengths are fixed in advance) and the values of the jumps
at the (fixed) breakpoints.  Subadditivity of these functions is then
characterized by finitely many linear inequalities \cite[\autoref{survey:section:minimalityTest}]{igp_survey}.  The functions
corresponding to the extreme points of the polytope~$P\Theta_n$ are called 
``$\mathrm{DPL}_n$-extreme functions''.  These functions are candidates for,
but not guaranteed to be extreme functions for the infinite group problem,
because perturbations may exist that are not piecewise linear on the same set
of breakpoints but rather on a refinement; see
\cite[\autoref{survey:s:interpolation}]{igp_survey}. 

For small $n$, the extreme points of~$P\Theta_n$ and thus the resulting
$\mathrm{DPL}_n$-extreme functions can be listed.  Richard et
al.~\cite[Theorem 28]{Richard-Li-Miller-2009:Approximate-Liftings} accomplish
this for $n=1$, giving piecewise formulas for the extreme points in terms of
the single remaining breakpoint parameter~$f$.  For any value of~$f$, there are three
extreme points.  Richard et
al.~\cite{Richard-Li-Miller-2009:Approximate-Liftings} note that the first one
corresponds to the \sagefunc{gmic} function and the second one recovers
functions proved extreme in~\cite{dey1}, 
namely, \sage{\sagefunc{drlm_3_slope_limit}(f)} for $0 < f < \frac{1}{2}$ and
\sage{\sagefunc{drlm_2_slope_limit}(f, 1, 1)} for $\frac{1}{2} \leq f < 1$. We
note that for $\frac{1}{3} \leq f < 1$, the $\mathrm{DPL}_n$-extreme
function corresponding to the third extreme point of~$P\Theta_1$ is equal to
\sage{\sagefunc{drlm_2_slope_limit}(f, 1, 2)} and thus is extreme. We now
prove that for the other case where $0 < f < \frac{1}{3}$ the function is
extreme as well. 



\begin{figure}[t]
  \centering
  \includegraphics[width=0.8\linewidth]{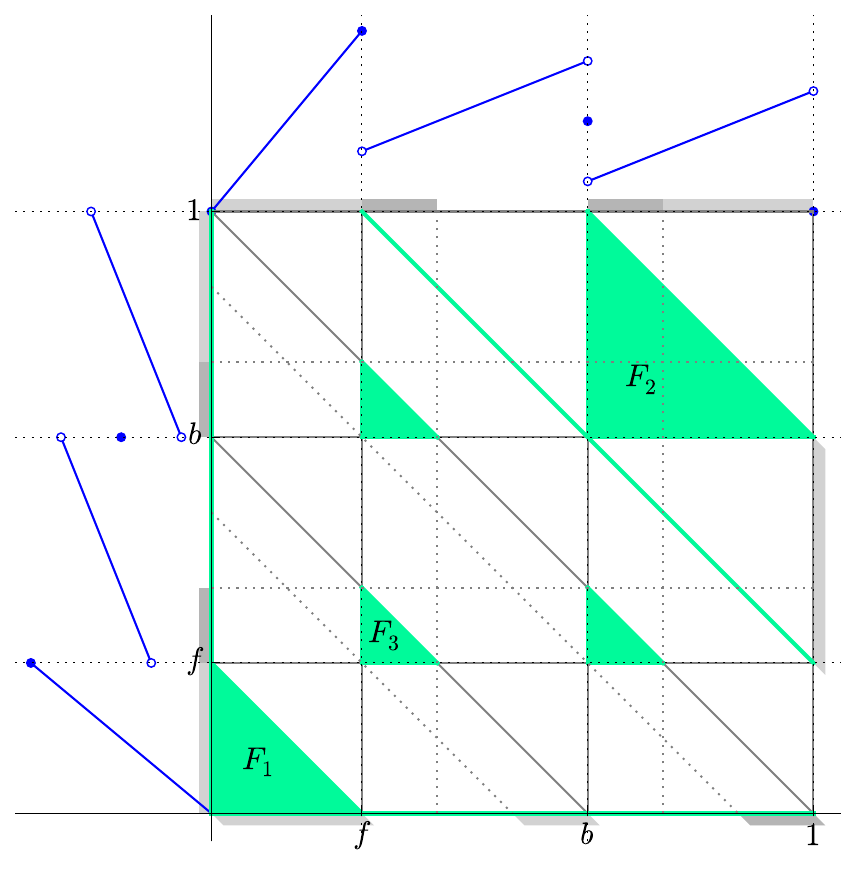}
  \caption{Diagram of the function \sagefunc{rlm_dpl1_extreme_3a} (\emph{blue
      graphs on the top and the left}) 
    and its polyhedral complex $\Delta\P$ (\emph{gray
      solid lines}).
    The set $E(\pi)$ is the union of the faces shaded in green.
    The \emph{heavy diagonal green line} $x + y = 1+f$ 
    corresponds to the symmetry condition (the line $x+y = f$ appears as an
    edge of $F_1$).  Vertices of $\Delta\P$ do not
    necessarily project (\emph{dotted gray lines}) to breakpoints.  At the
    borders, the projections $p_i(F)$ of two-dimensional additive faces are
    shown as \emph{gray shadows}: $p_1(F)$ at the top border, $p_2(F)$ at the
    left border, $p_3(F)$ at the bottom and the right borders.}
  \label{fig:rlm_dpl1_extreme_3a}
\end{figure}

\begin{theorem} \label{thm:rlm_dpl1_extreme_3a}
Let $f \in (0, \frac{1}{3})$ be real. The \sagefunc{rlm_dpl1_extreme_3a} function $\pi$ defined as follows is extreme:
\[
\pi(x) =
  \begin{cases}
   \frac{1}{f} x & \text{if } 0 \leq x \leq f \\
   \frac{2}{1+2f} x & \text{if } f < x < \frac{1+f}{2} \\
   \frac{1}{2} & \text{if } x = \frac{1+f}{2} \\
   \frac{2}{1+2f} x - \frac{1}{1+2f}  & \text{if } \frac{1+f}{2} < x < 1 \\
   0  & \text{if } x = 1
  \end{cases}
\]
\end{theorem}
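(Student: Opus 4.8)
The plan is to follow the standard extremality proof pattern described in the introduction, adapted to the discontinuous setting. First I would record that $\pi$ is minimal: being a $\mathrm{DPL}_1$-extreme function it is an extreme point of the minimality polytope $P\Theta_1$, hence a subadditive, periodic (mod~$1$), symmetric valid function; alternatively this is checked directly from the finitely many subadditivity inequalities and the symmetry condition $\pi(x)+\pi(f-x)=1$. To establish extremality, suppose $\pi=\tfrac12(\pi^1+\pi^2)$ with $\pi^1,\pi^2$ valid functions; by the standard reduction (using minimality of $\pi$) we may take $\pi^1,\pi^2$ minimal. Set $\bpi:=\pi^1-\pi$, so that $\pi^1=\pi+\bpi$ and $\pi^2=\pi-\bpi$, and the goal becomes $\bpi\equiv 0$. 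Note that $\pi$ is essentially a discontinuous \emph{two-slope} function, with the steep slope $\frac{1}{f}$ on $[0,f]$ and the shallow slope $\frac{2}{1+2f}$ elsewhere, so one cannot directly invoke the continuous two-slope theorem; the discontinuities force the hands-on argument below.

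The first substantive step is the inheritance of additivity. Writing $\Delta\pi(x,y)=\pi(x)+\pi(y)-\pi(x+y)\ge 0$ for the subadditivity slack, we have $\Delta\pi=\tfrac12(\Delta\pi^1+\Delta\pi^2)$ with both summands nonnegative; hence on the additivity set $E(\pi)=\{\,(x,y):\Delta\pi(x,y)=0\,\}$ we must have $\Delta\pi^1=\Delta\pi^2=0$, so $\Delta\bpi=0$ on all of $E(\pi)$. Together with $\bpi(0)=0$, periodicity mod~$1$, and the homogeneous symmetry relation $\bpi(x)+\bpi(f-x)=0$ inherited from the symmetry condition, this constitutes the full system of homogeneous linear constraints on $\bpi$. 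I would then read off from \autoref{fig:rlm_dpl1_extreme_3a} the two-dimensional faces $F$ of $\Delta\P$ contained in $E(\pi)$, together with their projections $I=p_1(F)$, $J=p_2(F)$, $K=p_3(F)$ onto the axes and anti-diagonal.

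Next I would apply the Interval Lemma --- in the form valid for possibly discontinuous functions --- to each such additive face: on $I$, $J$, and $K$ the perturbation $\bpi$ is affine with a common slope. Chaining these relations across a family of faces whose projections cover $[0,1]$, and invoking $\bpi(0)=0$ and the symmetry relation, forces $\bpi$ to be affine on each of the open subintervals $(0,f)$, $(f,\tfrac{1+f}{2})$, $(\tfrac{1+f}{2},1)$ with slopes that propagate consistently across them; periodicity and symmetry then pin every slope to $0$, so $\bpi$ is piecewise constant on the open pieces. Here the hypothesis $f<\tfrac13$ enters essentially: it governs the geometry of the additive faces --- in particular the triangle $F_1$ with edge $x+y=f$ together with the diagonal $x+y=1+f$ --- and hence whether the projections genuinely cover every interval. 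This is precisely why the regime $f<\tfrac13$ requires an argument separate from $f\ge\tfrac13$.

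The main obstacle is controlling the discontinuities rather than the slopes. Once $\bpi$ is piecewise constant on the open pieces, I must still show each constant vanishes and that the jump-perturbations at the breakpoints $f$ and $\tfrac{1+f}{2}$, as well as the isolated point-values at $x=\tfrac{1+f}{2}$ and $x=1$, are all zero. For this I would use the lower-dimensional additive faces (edges and vertices of $\Delta\P$) incident to these breakpoints, which yield equations relating the one-sided limits and point-values of $\bpi$; combined with $\bpi(0)=0$ and the symmetry relation, these force all constants and jumps to $0$. Concluding $\bpi\equiv 0$ gives $\pi^1=\pi^2=\pi$, proving extremality. The delicate part throughout is the bookkeeping of one-sided limits at the jumps, which is exactly where the discontinuous form of the Interval Lemma, rather than its classical continuous version, is indispensable.
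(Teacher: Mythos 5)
Your overall skeleton---minimality, the perturbation $\bar\pi$, inheritance of additivity on $E(\pi)$, the interval lemma applied to the full-dimensional additive triangles, then a final accounting of jumps---is the same roadmap the paper follows. But the central step of your argument contains a genuine gap: the claim that, once $\bar\pi$ is affine on $(0,f)$, $(f,b)$ and $(b,1)$ (where $b=\frac{1+f}{2}$), ``periodicity and symmetry then pin every slope to $0$, so $\bar\pi$ is piecewise constant on the open pieces.'' This is false. For any constants $c$ and $\tau$, the function defined by $\bar\pi\equiv 0$ on $[0,f]$, $\bar\pi(x)=\tau+c(x-f)$ on $(f,b)$, $\bar\pi(b)=0$, $\bar\pi(x)=-\tau-c(1-x)$ on $(b,1)$, $\bar\pi(1)=0$, extended periodically, satisfies every constraint you have invoked at that stage: it is affine on each open piece, vanishes at $0$ and $f$, obeys the homogeneous symmetry relation $\bar\pi(x)+\bar\pi(f-x)=0$ (the pieces $(f,b)$ and $(b,1)$ are swapped by $x\mapsto 1+f-x$), and is periodic. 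So symmetry and periodicity leave a two-parameter family with arbitrary slope; nothing so far forces $c=0$, and your subsequent plan---``$\bar\pi$ is piecewise constant, now kill the constants and jumps''---starts from a false premise. The slope and the jump cannot be decoupled in this order; they must be determined jointly.

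What actually eliminates $c$ and $\tau$ (and is the heart of the paper's proof) is a pair of additivity relations coming from diagonal points of $E(\pi)$ and their one-sided limits: for $x$ slightly greater than $f$ one has $(x,x)\in E(\pi)$ because $2x\in(f,b)$---this is exactly where $f<\frac13$ is used---and for $x$ slightly greater than $b$ one has $(x,x)\in E(\pi)$ because $2x-1$ is slightly greater than $f$. Letting $x\downarrow f$ and $x\downarrow b$ gives $2\bar\pi(f^+)=\bar\pi(2f^+)$ and $2\bar\pi(b^+)=\bar\pi(f^+)$, i.e., $2\tau=\tau+fc$ and $-2\tau-(1-f)c=\tau$, whose only solution is $c=\tau=0$. (In the paper these are equations \eqref{add_eq_1} and \eqref{add_eq_2}, written for the candidate solution $\varphi$ rather than for the perturbation; the formulations are equivalent.) This also corrects your account of where the hypothesis $f<\frac13$ enters: it is not about whether the projections of the full-dimensional faces cover $[0,1]$---the triangles $F_1$, $F_2$ together with symmetry cover the three open intervals for any $f\in(0,\tfrac12)$---but precisely about guaranteeing $f<2f<b$, so that $\bar\pi(2f^+)$ lands in an interval where affineness is already established and the first equation becomes usable. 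Without identifying these two relations, the proof cannot be completed along the lines you describe.
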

\begin{proof}
Minimality of $\pi$ follows from~\cite[Definition~19, Proposition~18 and Corollary~23]{Richard-Li-Miller-2009:Approximate-Liftings}.



The proof of extremality 
follows the basic roadmap mentioned in~\cite[\autoref{survey:s:roadmap}]{igp_survey}. Suppose that
$\pi= \frac{1}{2}(\pi^1 + \pi^2)$, where $\pi^1,\pi^2$ are valid functions.  Define the \emph{additivity domain} of $\pi$ as
\begin{equation}
    E(\pi) := \setcond{(x, y)\in\R\times\R} { \pi(x) + \pi(y) -\pi(x + y) = 0}.
 \end{equation} 
By \cite[\autoref{survey:lemma:tight-implies-tight}]{igp_survey}, 
\begin{enumerate}[\rm(i)]
\item $\pi^1,\pi^2$ are minimal; 
\item all subadditivity relations $\pi(x + y) \le \pi(x) + \pi(y)$ that are tight for~$\pi$ are also tight for $\pi^1,\pi^2$, i.e., $E(\pi) \subseteq E(\pi^1), E(\pi^2)$;  
\item because $\pi$ is continuous from the right at $x=0$, the functions $\pi^1,\pi^2$ are continuous at all points at which $\pi$ is continuous.
\end{enumerate} 
Consider $\pi$, $\pi^1$, $\pi^2$ as solutions to the following system of linear equations:
\begin{equation}
\label{equation:system}
\begin{cases}
\varphi(0) = 0,\\
\varphi(f) = 1,\\
\varphi(1) = 0,\\
\varphi(u) + \varphi(v)  =  \varphi(u + v) 
,\quad (u,v) \in E(\pi),
\end{cases}
\end{equation}
where $\varphi$ is a minimal function that is continuous at all points at which $\pi$ is continuous.
We will show that \eqref{equation:system} has a unique solution. This will imply that $\pi = \pi^1 = \pi^2$, thereby establishing the extremality of $\pi$.

Following \cite[\autoref{survey:section:delta-p-definition}]{igp_survey}, we
define for any intervals $I,J,K\subseteq \R$ the set
\begin{equation}\label{eq:F-def}
F(I,J,K) := \setcond{(x,y) \in I \times J}{x + y \in K} \subseteq \R \times \R.
\end{equation}
Define the projections $p_1,p_2,p_3\colon \R\times \R \to \R$ by
\begin{equation}
\label{eq:projections}
p_1(x,y) = x, \quad p_2(x,y) = y, \quad  p_3(x,y) = x+y.
\end{equation}
Consider the closed triangle
\begin{align*}
  F_1 &= F\bigl(\, [0,f],\; [0,f],\; [0,f]\,\bigr) \\
  \intertext{and the open triangles}
  F_2 &= F\bigl(\,(b, 1),\; (b, 1), \; (1 + f, 1+b)\,\bigr), \\
  F_3 &= F\bigl(\,(f, b),\; (f, b), \; (2f, b)\,\bigr),
\end{align*}
where $b=\frac{1+f}{2}$. See \autoref{fig:rlm_dpl1_extreme_3a}
for an illustration.
Then, for any $(x, y) \in F_1$, \[\pi(x)+\pi(y)-\pi(x+y) = \frac{x}{f}  + \frac{y}{f}  - \frac{x+y}{f} =0.\] For any $(x, y) \in F_2$, \[ \pi(x)+\pi(y)-\pi(x+y) = \frac{2x -1}{1+2f} + \frac{2y-1}{1+2f}- \frac{2(x+y-1)}{1+2f} =0.\]
For any $(x, y) \in F_3$, \[ \pi(x)+\pi(y)-\pi(x+y) = \frac{2x}{1+2f} +
  \frac{2y}{1+2f} - \frac{2(x+y)}{1+2f} = 0.\] 
Hence $F_1, F_2, F_3 \subset E(\pi)$.
By the convex additivity domain lemma
\cite[\autoref{survey:lem:projection_interval_lemma_fulldim}]{igp_survey}
applied to the full-dimensional convex set $F_1$,
$\varphi$ is affine on the open interval $\intr(p_1(F_1)) = (0,f)$.
(We say that the interval $(0,f)$ is \emph{covered}.) Then $\varphi$ is uniquely determined on $[0,f]$ since $\varphi(0)=0$, $ \varphi(f) = 1$, and $\varphi$ is continuous from the right at $0$ and from the left at $f$. 

Similarly, by
\cite[\autoref{survey:lem:projection_interval_lemma_fulldim}]{igp_survey}
applied to $F_2$, $\varphi$ is affine on $(b, 1)$. Let $s$ be the slope. By symmetry, $\varphi$ is also affine on $(f, b)$ with the same slope $s$. 
Let $\varphi(u^-)$ and $\varphi(u^+)$ denote the left limit and right limit
value at $u$, respectively. By the symmetry condition of minimal valid
functions \cite[\autoref{survey:thm:minimal}]{igp_survey}, $\varphi(f^+) + \varphi(1^-) =1$ and $\varphi(b^-) + \varphi(b^+) = 1$. Let $t = \varphi(f^+)$, and thus $\varphi(b^-) = t + \frac{1-f}{2}s$, $\varphi(b^+) = 1 - t - \frac{1-f}{2}s$, and $\varphi(1^-) = 1-t$. Also note that $\varphi(1) = 0$ and $\varphi(b)=\frac{1}{2}$. Therefore $\varphi$ is uniquely determined by $s$ and $t$.

Now set up two more equations using the additivity relations:
\begin{align}\label{add_eq_1}
  \varphi(f^+) + \varphi(f^+) &= \varphi(2f^+),\\
\intertext{which corresponds to the lower left corner of the open triangle
  $F_3$, and}
  \label{add_eq_2}
  \varphi(b^+) + \varphi(b^+ ) &= \varphi(f^+),
\end{align}
which corresponds to the lower left corner of the open triangle $F_2$.
Since $0 < f < 1/3$, we have $f < 2f < b$. It follows that $\varphi(2f^+) = t + fs$. Hence $2t=t+fs$ by \eqref{add_eq_1} and $2(1-t-\frac{1-f}{2}s)=t$ by \eqref{add_eq_2}. The system has a unique solution, namely $s=\frac{2}{1+2f}$ and $t=\frac{2f}{1+2f}$. This shows that $\varphi$ is unique on $[0,1]$, completing the proof of extremality of $\pi$.  
\end{proof}              

\section{Failure of Chen's 3-slope construction}
\label{s:chen_3_slope_not_extreme}

\begin{figure}[t]
  \centering
  \includegraphics[width=0.45\linewidth]{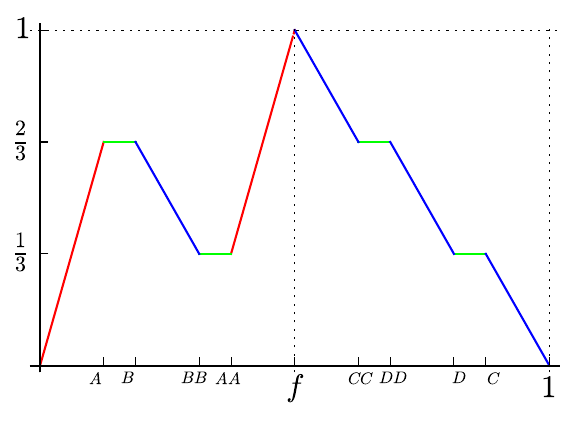}\quad
  \includegraphics[width=0.45\linewidth]{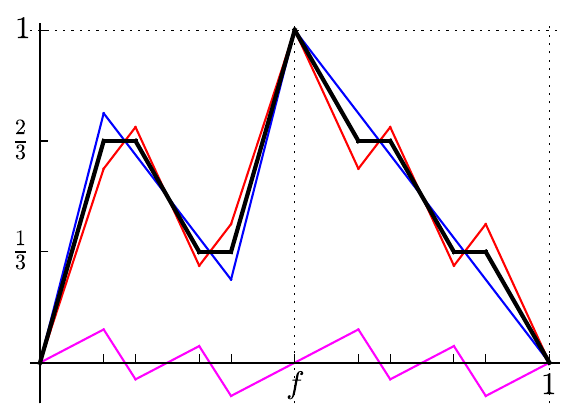}
  \caption{The function \sagefunc{chen_3_slope_not_extreme} is 
    minimal, but not extreme, as proved by
    \sage{\sagefunc{extremality_test}(h, show\underscore{}plots=True)}.
    The procedure first shows that
    for any distinct minimal $\pi^1 = \pi + \bar\pi$ (\emph{blue}), $\pi^2 = \pi
    - \bar\pi$ (\emph{red}) such that $\pi = \tfrac{1}{2}\pi^1
    + \tfrac{1}{2} \pi^2$, the functions $\pi^1$ and $\pi^2$ are continuous
    piecewise linear with the same breakpoints as $\pi$ (in the terminology of
    \cite{basu-hildebrand-koeppe:equivariant}, $\pi$ is \emph{affine imposing}
    on all intervals between breakpoints).  A finite-dimensional extremality
    test then finds a perturbation~$\bar\pi$ (\emph{magenta}), as shown.}
  \label{fig:chen_3_slope_not_extreme}
\end{figure}

Proofs of minimality and extremality of given piecewise linear functions
follow a standard pattern, but the complexity of these proofs grows with the
number of pieces.  Writing and verifying these proofs is a tedious and
error-prone task, which makes the case for computer-based proofs.  

We illustrate this by a family of 3-slope functions
(\autoref{fig:chen_3_slope_not_extreme}) constructed by Chen \cite{chen},
depending on the value $f \leq \frac{1}{2}$ and real parameters $A$, $B$, $C$, $D$, satisfying
certain conditions. Chen claims these functions to be minimal and extreme under these
conditions; however, his proofs are flawed.\footnote{The subadditivity checks
  in \cite[section~2.1 and 2.2]{chen} are insufficient, since the case
  $\pi(u_1) + \pi(u_2) \geq \pi(u_1+u_2)$ with $u_1$ and $u_1 + u_2$ being
  endpoints of $\pi$ is missed.  This flaw can be easily repaired; the functions
  are actually minimal. 
  However, the extremality proof is flawed and cannot
  be repaired.  In \cite[page~37]{chen}, the author
  claims that
  $-\pi(C ) + \pi(CC) + \pi(D) - \pi(DD) = 0$ 
  must hold because
  there is a ``height decrease'' of $1$ in the function from $\pi(f)$ and
  $\pi(1)$. In fact, this is false whenever
  the common slope value over the intervals $[A, B], [BB, AA], [CC, DD]$ and $[D,C]$,
  called $s_3$, is non-zero.
  Since one equation is missing from the system of six equations in six unknowns claimed by the author,  the system does not have a unique solution. As a result, $\pi$ is not extreme.}
The software \cite{infinite-group-relaxation-code}, as part of which the
Electronic Compendium is released, allows us to test the extremality for a
given function (i.e., for fixed parameters $f$, $A$, $B$, $C$, $D$).
Computationally we verified, by randomly generating thousands of functions in the described family, that none of these functions appear to be extreme.



\section{A standard proof of the extremality of
  \sage{drlm\underscore{}backward\underscore{}3\underscore{}slope} that
  removes the assumption of rational data}
\label{s:new_proof_drlm_backward_3_slope}

\begin{figure}[t]
  \centering
  \includegraphics[width=0.8\linewidth]{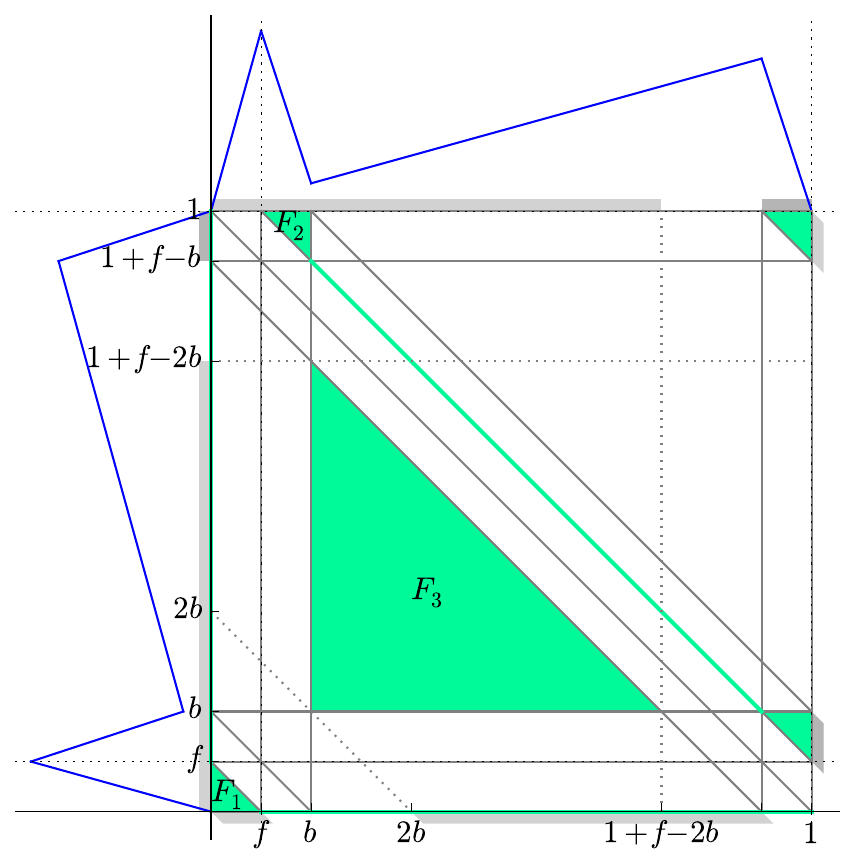}
  \caption{The \sagefunc{drlm_backward_3_slope} function}
  \label{fig:drlm_backward_3_slope}
\end{figure}

The function
\sagefunc{drlm_backward_3_slope} (\autoref{fig:drlm_backward_3_slope}) was discovered in \cite{dey1} and proven to be extreme for rational numbers $f$ and $b$ satisfying $f < b < (1+f)/4 < 1$. The proof is based on interpolation of extreme functions for finite cyclic groups \cite{AraozEvansGomoryJohnson03}, where the assumption of rational data is needed.

In fact, following the same roadmap as in the proof of \autoref{thm:rlm_dpl1_extreme_3a}, one can show that \sagefunc{drlm_backward_3_slope} is extreme 
without assuming $f$ and $b$ to be rational numbers.
 
\begin{theorem} \label{thm:drlm_backward_3_slope}
Let $f$ and $b$ be real numbers such that $0 < f < b \leq \frac{1+f}{4}$. The \sagefunc{drlm_backward_3_slope} function $\pi$ defined as follows is extreme:
\[
\pi(x) =
  \begin{cases}
   \frac{x}{f} & \text{if } 0 \leq x \leq f \\
   1 + \frac{(1+f-b)(x-f)}{(1+f)(f-b)} & \text{if } f \leq x \leq b \\
   \frac{x}{1+f} & \text{if } b \leq x \leq 1+f-b \\
   \frac{(1+f-b)(x-1)}{(1+f)(f-b)}  & \text{if } 1+f-b \leq x \leq 1
  \end{cases}
\]
\end{theorem}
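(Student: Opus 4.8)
The plan is to follow the roadmap of \autoref{thm:rlm_dpl1_extreme_3a} verbatim. Minimality of $\pi$ is already known (this is the \sagefunc{drlm_backward_3_slope} function of \cite{dey1}) and, in any case, follows from the symmetry condition together with the finitely many subadditivity inequalities \cite[\autoref{survey:thm:minimal}]{igp_survey}; note that $\pi$ is continuous on $\R$, with breakpoints $0, f, b, 1+f-b, 1$ and three slopes, namely $\frac1f$ on $[0,f]$, the negative slope $s_2 = \frac{1+f-b}{(1+f)(f-b)}$ on both $[f,b]$ and $[1+f-b,1]$, and $\frac{1}{1+f}$ on $[b,1+f-b]$. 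Writing $\pi = \frac12(\pi^1+\pi^2)$ with $\pi^1,\pi^2$ valid, \cite[\autoref{survey:lemma:tight-implies-tight}]{igp_survey} makes $\pi^1,\pi^2$ minimal and continuous (as $\pi$ is continuous) with $E(\pi)\subseteq E(\pi^1),E(\pi^2)$; so it again suffices to show that the only minimal continuous $\varphi$ satisfying $\varphi(0)=0$, $\varphi(f)=1$, $\varphi(1)=0$ and the additivities coming from $E(\pi)$ is $\pi$ itself.

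I would then exhibit full-dimensional additive triangles whose projections cover every interval between consecutive breakpoints and apply the convex additivity domain lemma \cite[\autoref{survey:lem:projection_interval_lemma_fulldim}]{igp_survey}. The interval $(0,f)$ is covered by $F_1 = F([0,f],[0,f],[0,f])$, which with $\varphi(0)=0$, $\varphi(f)=1$ pins down $\varphi(x)=\frac{x}{f}$ on $[0,f]$. The short interval $(f,b)$ is covered by $F_2 = F([f,b],\,[1+f-b,1],\,[1+f,1+b])$: a direct computation (using the common slope $s_2$ and periodicity of $\pi$, exactly as the analogous face in the previous proof used a third interval extending beyond $1$) shows $\pi(x)+\pi(y)-\pi(x+y)=0$ there, so $\varphi$ is affine on $(f,b)$, and the symmetry condition $\varphi(x)+\varphi(1+f-x)=1$ then makes it affine on $(1+f-b,1)$ too. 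The middle interval is handled by $F_3 = F([b,1+f-b],[b,1+f-b],[b,1+f-b])\subseteq E(\pi)$, whose projection covers $(b,1+f-2b)$; reflecting through the symmetry condition covers $(2b,1+f-b)$, and the two pieces together cover all of $(b,1+f-b)$.

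Now $\varphi$ is continuous and piecewise linear with breakpoints among $\{0,f,b,1+f-b,1\}$, hence determined by the two values $\varphi(b)$ and $\varphi(1+f-b)$. Two equations finish the job: the additive relations interior to $F_3$ force the affine piece on $(b,1+f-b)$ to be homogeneous, $\varphi(x)=m\,x$, and the symmetry condition at $x=b$ gives $\varphi(b)+\varphi(1+f-b)=1$; substituting $\varphi(b)=mb$, $\varphi(1+f-b)=m(1+f-b)$ yields $m=\frac{1}{1+f}$. Thus $\varphi(b)=\frac{b}{1+f}$ and $\varphi(1+f-b)=\frac{1+f-b}{1+f}$, which in turn forces the slopes on $(f,b)$ and $(1+f-b,1)$ to equal $s_2$. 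Hence $\varphi=\pi$ is the unique solution, so $\pi^1=\pi^2=\pi$ and $\pi$ is extreme.

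The main obstacle --- and the only place the hypothesis $b\le\frac{1+f}{4}$ is used --- is the covering of the middle interval: the projection of $F_3$ reaches only up to $1+f-2b$ and its reflection only down to $2b$, so the two pieces overlap and cover $(b,1+f-b)$ exactly when $2b\le 1+f-2b$, i.e.\ $b\le\frac{1+f}{4}$. Were $b$ larger, an uncovered gap would remain in the middle on which $\varphi$ could be perturbed, and indeed $\pi$ would no longer be extreme. Checking that $F_1,F_2,F_3$ lie in $E(\pi)$ and computing their projections are routine; the conceptual content is this tight use of the hypothesis.
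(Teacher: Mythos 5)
Your proof is correct and follows the paper's argument essentially verbatim: the same minimality step, the same convex additivity domain lemma, and the same three additive faces (your $F_3=F([b,1+f-b],[b,1+f-b],[b,1+f-b])$ is literally the same triangle as the paper's $F_3=F([b,1+f-2b],[b,1+f-2b],[2b,1+f-b])$, just written with different $I,J,K$), followed by an equivalent finite linear system to pin down the remaining unknowns (the paper uses $\varphi(1)=0$ together with the additivity $\varphi(b)+\varphi(b)=\varphi(2b)$, where you use homogeneity of the middle piece forced by interior additivities of $F_3$ plus the symmetry relation at $b$). The only unsupported statement is the closing aside that $\pi$ \emph{would no longer be extreme} when $b>\frac{1+f}{4}$ --- your argument shows only that these particular faces fail to cover the middle interval, not non-extremality --- but that claim is outside the theorem and does not affect the proof.
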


\begin{proof}
First, it is straightforward to check that the function $\pi$ is symmetric and
subadditive. Thus, by \cite[\autoref{survey:thm:minimal}]{igp_survey}, $\pi$
is minimal. 
Second, by the convex additivity domain lemma for continuous functions \cite[\autoref{survey:lem:projection_interval_lemma-corollary}]{igp_survey}
applied to
\begin{align*}
  F_1 &= F\bigl(\, [0,f],\; [0,f],\; [0,f]\,\bigr), \\
  F_2 &= F\bigl(\, [f,b],\; [1+f-b,1],\; [1+f,1+b]\,\bigr), \\
  F_3 &= F\bigl(\, [b,1+f-2b],\; [b,1+f-2b],\; [2b,1+f-b]\,\bigr),
\end{align*}
the intervals $[0, f]$, $[f,b]$, $[b, 1+f-b]$, and $[1+f-b,1]$ are all covered
with slope values $s_1$, $s_2$, $s_3$, and $s_2$, respectively (see \autoref{fig:drlm_backward_3_slope}). Finally, set
up a system of equations using $\varphi(0) =\varphi(1) =0$, $\varphi(f)=1$ and
$\varphi(b)+\varphi(b) = \varphi(2b)$. This system has a unique
solution. Therefore, $\pi$ is an extreme function. 
\end{proof}

\section{A continuous ``1-slope function'' as a  limiting case of Basu et al.'s non--piecewise linear extreme function}
\label{s:bccz_counterexample}

Basu, Conforti, Cornu\'ejols and Zambelli~\cite{bccz08222222} constructed an
extreme non--piecewise linear function, disproving a conjecture of Gomory and
Johnson~\cite[section 6.1]{tspace}.  Their function is defined as a
limit of continuous piecewise linear functions which are defined as follows.
Let $f\in (0,1)$ be real.  
Consider a sequence of real numbers $\epsilon_1 > \epsilon_2 >
\dots$ such that $\epsilon_1 \leq 1-f$ and
\begin{equation} \label{series-less-than-1}
  \mu^- = (1-f) + \sum_{i=1}^{+\infty} 2^{i-1} \epsilon_i < 1.
\end{equation}
Basu et al.\ give a recursive construction of a function $\psi_n\colon\R\to\R$, each
depending on parameters $f$ and $\epsilon_1, \dots, \epsilon_n$; the function
$\psi_0$ is the \sagefunc{gmic} function \cite[section~3]{bccz08222222}.
We state a first observation without proof; see \autoref{fig:kf_n_step_mir}
for an illustration.
\begin{observation}
  Basu et al.'s functions $\psi_n$ are special cases of 
  \sagefunc{kf_n_step_mir} functions \cite{kianfar1} with the following parameters:
  \[f = f, \quad a_0 = 1, \quad a_1 = \frac{f + \epsilon_1}{2},\]
  \[a_i = \frac{a_{i-1} - \epsilon_{i-1} + \epsilon_i}{2},
  \text{ for } i = 2, 3, \dots, n.\]
\end{observation}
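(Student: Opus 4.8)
The plan is to prove the identity by induction on $n$, having first placed the two recursive constructions side by side and observed that both families consist of \emph{two-slope} continuous functions. On the Basu et al.\ side, $\psi_0$ is the \sagefunc{gmic} function and $\psi_n$ is obtained from $\psi_{n-1}$ by the fill-in step of \cite[section~3]{bccz08222222}, which subdivides the affine piece near $f$ into a finer sawtooth whose tooth widths are controlled by $\epsilon_{n-1}$ and $\epsilon_n$, while retaining the same two slopes. On the Kianfar--Fathi side, the \sagefunc{kf_n_step_mir} function is also two-slope and is defined recursively: the function with step list $(a_0,\dots,a_n)$ arises from the one with list $(a_0,\dots,a_{n-1})$ by one MIR rounding/lifting step of step length $a_n$ \cite{kianfar1}. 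Since the two recursions have the same shape -- insert one finer scale per step -- it suffices to match a single step of each under the stated substitution.

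First I would settle the base case and check admissibility. For $n=0$, with the single parameter $a_0=1$ the \sagefunc{kf_n_step_mir} function is the $1$-step MIR, that is, the \sagefunc{gmic} function with fractional value $f$, which is exactly $\psi_0$. To verify that the proposed parameters form an admissible instance of the Kianfar--Fathi construction, note that $\epsilon_1>\epsilon_2>\dots>0$ and $\epsilon_1\le 1-f$ give $a_1=\frac{f+\epsilon_1}{2}\le\frac12<1=a_0$, and the recursion $a_i=\frac{a_{i-1}-\epsilon_{i-1}+\epsilon_i}{2}$ together with $\epsilon_i<\epsilon_{i-1}$ gives $0<a_i<\frac12 a_{i-1}$. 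Hence $1=a_0>a_1>\dots>a_n>0$ is strictly decreasing with every consecutive ratio at least $2$, so the step-length conditions of \cite{kianfar1} are met, and the hypothesis $\mu^-<1$ keeps the construction within a single period. A bookkeeping identity I would exploit below, read off directly from the recursion, is $a_{i-1}-2a_i=\epsilon_{i-1}-\epsilon_i$.

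For the inductive step I would assume $\psi_{n-1}=\sagefunc{kf_n_step_mir}$ with list $(a_0,\dots,a_{n-1})$ and then show that the Basu fill-in producing $\psi_n$ agrees breakpoint-for-breakpoint and slope-for-slope with the MIR step of step length $a_n$. Concretely, I would track the breakpoints introduced near $f$, check that the new pieces have widths $a_n$ and $a_{n-1}-2a_n=\epsilon_{n-1}-\epsilon_n$, and confirm that the two slope values and the value at $f$ coincide; periodicity modulo $1$ and the symmetry condition then propagate the agreement over all of $[0,1]$.

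The hard part will be making the dictionary between the two differently phrased recursions precise: Basu et al.\ describe $\psi_n$ geometrically as a shrinking two-slope refinement indexed by the $\epsilon_i$, whereas Kianfar--Fathi describe the $n$-step MIR arithmetically by iterated MIR rounding indexed by the step lengths $a_i$. The crux is to verify that the substitution $a_i=\frac{a_{i-1}-\epsilon_{i-1}+\epsilon_i}{2}$ carries the $\epsilon$-parametrized breakpoints exactly onto the $a$-parametrized step points; once a single recursion step is matched in this way, the induction closes the argument.
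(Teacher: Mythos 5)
A preliminary remark on the comparison itself: the paper does \emph{not} prove this statement --- it is introduced with the words ``We state a first observation without proof; see \autoref{fig:kf_n_step_mir} for an illustration.'' So there is no reference proof to match against, and your proposal has to stand on its own. Your overall skeleton (induction on $n$; base case $\psi_0 = {}$\sagefunc{gmic}; match one refinement step of \cite[section~3]{bccz08222222} against one step of the Kianfar--Fathi recursion) is the natural way to supply the missing proof. However, several of the concrete assertions you make are wrong, and the one identity that actually makes the dictionary work never appears.

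First, the step does \emph{not} ``retain the same two slopes'': only the negative slope $-\frac{1}{1-f}$ is preserved, while the positive slope increases at every stage, from $s_{n-1}=\frac{1-\gamma_{n-1}}{(1-f)\gamma_{n-1}}$ to $s_n$, where $\gamma_i = f - \sum_{k=1}^i 2^{k-1}\epsilon_k$; also the refinement acts on all $2^{n-1}$ rising intervals of $\psi_{n-1}$, not just ``near $f$.'' Second, the admissibility conditions of \cite{kianfar1} are not ``consecutive ratios at least $2$''; they read $a_i \lceil b_{i-1}/a_i\rceil \le a_{i-1}$, where $b_0=f$ and $b_i = b_{i-1} - a_i\lfloor b_{i-1}/a_i\rfloor$ are the successive remainders, so they cannot be verified from the $a_i$ alone; moreover positivity of $a_i$ does not follow from $\epsilon_i<\epsilon_{i-1}$, but needs $\gamma_{i-1}>0$. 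Third --- and this is the crux you yourself flag --- the breakpoint matching you sketch is incorrect: under the stated substitution one has $a_i = \frac{b_{i-1}+\epsilon_i}{2}$, hence $b_i = \frac{b_{i-1}-\epsilon_i}{2}$, which by induction gives the key identity
\[
b_i \;=\; f - \sum_{k=1}^{i} a_k \;=\; \frac{\gamma_i}{2^i},
\]
i.e., the Kianfar--Fathi remainder equals the common length of the $2^i$ rising intervals of $\psi_i$ (Fact~4.1 of \cite{bccz08222222}). This identity shows $\lceil b_{i-1}/a_i\rceil = 2$ (so each step splits each rising interval into exactly two, matching the doubling in Basu et al.), and it shows that a rising interval of length $b_{n-1}=\gamma_{n-1}/2^{n-1}$ is split into rise/fall/rise pieces of widths $b_n$, $\epsilon_n = a_n - b_n$, $b_n$ --- precisely Basu et al.'s step. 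By contrast, your claimed widths ``$a_n$ and $a_{n-1}-2a_n=\epsilon_{n-1}-\epsilon_n$'' do not correspond to any pieces in either construction: the interval being subdivided has length $b_{n-1}$, not $a_{n-1}$, and no piece has width $\epsilon_{n-1}-\epsilon_n$. So the plan is right, but to close it you must introduce and prove the remainder bookkeeping $b_i=\gamma_i/2^i$ and redo the width and slope matching with it.
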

\begin{figure}[t]
  \centering
  \includegraphics[width=0.8\linewidth]{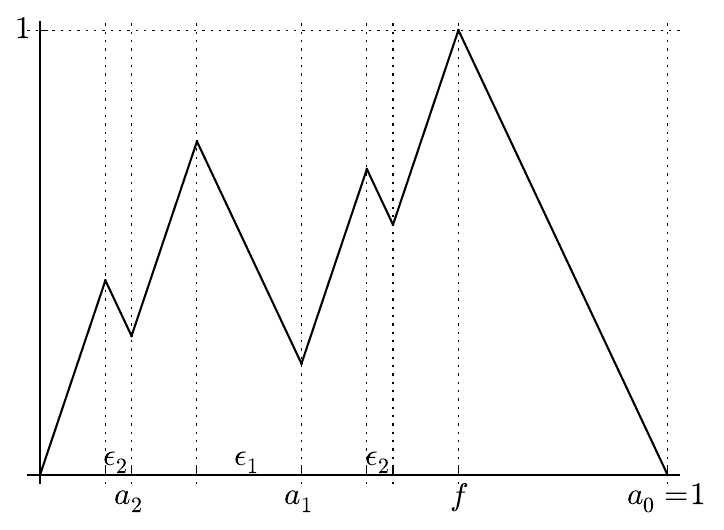}
  \caption{The \sagefunc{kf_n_step_mir} function}
  \label{fig:kf_n_step_mir}
\end{figure}
Basu et al.~\cite{bccz08222222} prove that the sequence formed by these
functions converges uniformly and hence gives a continuous limit
function~$\psi$.  The limit function is a 
fractal, an absolutely continuous, measurable, non--piecewise
linear ``2-slope function'' with the following properties:
\begin{enumerate}[\rm(i)]
\item There is a set $X^-\subseteq[0,1]$, a countable union of open intervals,
  of Lebesgue measure~$\mu^- < 1$, on which the function is differentiable
  with the derivative taking the same, negative value.
\item There is a set $X^+\subseteq[0,1]$ of measure~$\mu^+ = 1-\mu^- > 0$, which is nowhere
  dense, i.e., it does not contain any interval.  Such a set is sometimes
  called a \emph{fat Cantor set}.  The function is differentiable on $X^+$ as
  well, and the derivative takes the same, positive value on the points
  of~$X^+$.
\item There is an at most countable subset where the function is not differentiable.
\end{enumerate}

\medbreak

In this note, we consider an interesting fringe case of the same construction,
where $\mu^-$ in~\eqref{series-less-than-1} equals~$1$.  We restrict ourselves to the case 
of geometric sequences, which are the prime examples for the case $\mu^-<1$ as
well.

So consider the geometric sequence $\{\,\epsilon_i:i=1,2,\ldots\,\}$ with
common ratio $\frac{1}{q}$ and the first term $\epsilon_1 = \frac{q-2}{q} f$,
where $2 < q \leq \frac{2f}{2f-1}$ for $f > \frac{1}{2}$ and $2 < q$ for $f
\leq \frac{1}{2}$.  
Then $\{\epsilon_i\}$ is a decreasing sequence of positive reals, with
$\epsilon_1 \leq 1-f$ and
\begin{equation} \label{series}
  \mu^- = (1-f) + \sum_{i=1}^{+\infty} 2^{i-1} \epsilon_i = 1.
\end{equation}
Again let $\psi_i$ be the valid function defined in 
\cite[section~3]{bccz08222222}, for $i = 1,2,3, \ldots$. 
Let \[\gamma_i = f - \sum_{k=1}^i 2^{k-1}\epsilon_k = \left(\frac{2}{q}\right)^i f.\] 
By \eqref{series}, $\gamma_i > 0$ for every $i \geq 0$. Thus, by
\cite[Fact~4.1]{bccz08222222}, $\psi_i$ is well-defined.

We will now show by a simple calculation that in our case still uniform convergence holds.  The
following result replaces \cite[Lemma 5.2]{bccz08222222}.

\begin{lemma}
The sequence $\psi_1,\psi_2,\ldots$ is uniformly convergent.
\end{lemma}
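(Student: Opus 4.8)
The plan is to prove that $(\psi_n)$ is uniformly Cauchy and then to invoke completeness of the space of bounded functions $\R\to\R$ under the supremum norm. Concretely, I would bound the increments $\|\psi_{n+1}-\psi_n\|_\infty$ by a geometrically decaying quantity and sum the resulting series. The reason \cite[Lemma~5.2]{bccz08222222} does not apply verbatim is that its estimate is governed by the measure $\mu^+ = 1-\mu^-$ of the fat Cantor set, which is strictly positive in the regime $\mu^-<1$ but degenerates to $0$ in our fringe case; the estimate must therefore be carried out at each finite level using the positive quantity $\gamma_n = \left(\tfrac{2}{q}\right)^n f$, which plays the role of the level-$n$ measure of the ascending (positive-slope) part and which is what keeps the construction well-defined.

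First I would recall (and verify) the recursive structure of the construction: $\psi_{n+1}$ is obtained from $\psi_n$ by a fill-in that modifies $\psi_n$ only on the ascending intervals of $\psi_n$, whose total length at level $n$ equals $\gamma_n$, while preserving the values of $\psi_n$ at all of the coarser breakpoints. Thus $\psi_{n+1}$ and $\psi_n$ agree outside these intervals and at their endpoints, so it suffices to bound the deviation on a single modified interval.

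Second, on each such interval the refinement replaces an affine piece by a finer zigzag sharing the same endpoints. The descending segments introduced at level $n+1$ carry the bounded slope of magnitude $\tfrac{1}{1-f}$, while the ascending segments, though of unboundedly large slope, run between values already shared with $\psi_n$ over correspondingly tiny intervals and hence contribute a negligible deviation. A direct calculation then gives $\|\psi_{n+1}-\psi_n\|_\infty \le C\,\gamma_{n} = C f \left(\tfrac{2}{q}\right)^{n}$ for a constant $C$ depending only on $f$.

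Finally, since $q>2$ we have $\tfrac{2}{q}<1$, so $\sum_{n}\|\psi_{n+1}-\psi_n\|_\infty \le Cf\sum_{n}\left(\tfrac{2}{q}\right)^{n}<\infty$, whence $(\psi_n)$ is uniformly Cauchy and converges uniformly. The main obstacle is the per-level deviation bound of the second step: one must recognize the degeneracy of the BCCZ estimate at $\mu^+=0$ and then argue carefully that the steep ascending slopes do not spoil the supremum-norm bound. This works precisely because the intervals carrying those steep slopes shrink geometrically at the same rate $\tfrac{2}{q}$ as the controlling length $\gamma_n$, so that it is the bounded descending slope that governs $\|\psi_{n+1}-\psi_n\|_\infty$.
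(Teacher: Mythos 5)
Your high-level strategy---telescoping the differences, bounding $\sup_x|\psi_{n+1}(x)-\psi_n(x)|$ by a summable geometric series, and concluding that the sequence is uniformly Cauchy---is exactly the paper's strategy, and your diagnosis of why \cite[Lemma~5.2]{bccz08222222} needs replacing is essentially correct (there $\gamma_i\to\mu^+>0$ keeps the slopes $s_i$ bounded, whereas here $\gamma_i\to 0$ and $s_i\to\infty$). However, the central quantitative step of your argument is wrong, both in its statement and in its justification. On an ascending interval of $\psi_n$ (length $\gamma_n/2^n$), the difference $g=\psi_{n+1}-\psi_n$ vanishes at both endpoints, grows with slope $s_{n+1}-s_n$ along the first new ascending piece (length $\gamma_{n+1}/2^{n+1}$), and attains its maximum absolute value precisely at the tips of the new steep segments:
\begin{equation*}
\max|g| \;=\; (s_{n+1}-s_n)\,\frac{\gamma_{n+1}}{2^{n+1}}
\;=\;\frac{\epsilon_{n+1}}{2(1-f)\,\gamma_n}
\;=\;\frac{q-2}{2q(1-f)}\cdot\frac{1}{2^n}.
\end{equation*}
So the true per-level deviation decays at rate $\tfrac12$ per level---the rate coming from the doubling of the number of intervals---independently of $q$. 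Your claimed bound $\max|g|\le C\gamma_n=Cf\left(\tfrac{2}{q}\right)^n$ is therefore \emph{false} whenever $q>4$, a regime the hypotheses allow (any $q>2$ is permitted when $f\le\tfrac12$): the ratio $2^{-n}/(2/q)^n=(q/4)^n$ tends to infinity, so no constant $C$, even one depending on $f$ and $q$, can work.

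The underlying qualitative picture is also backwards. The deviation is \emph{not} negligible on the new ascending segments, and it is \emph{not} governed by the bounded descending slope; it is governed by the product (steep slope) $\times$ (tiny interval length), and this is summable only because the $\gamma$'s cancel. This is exactly how the paper argues: $\max|g|\le s_{n+1}\,\gamma_n/2^n\le \frac{1}{1-f}\cdot\frac{\gamma_n}{\gamma_{n+1}}\cdot\frac{1}{2^n}=\frac{q}{2(1-f)}\cdot\frac{1}{2^n}$, after which the telescoping sum $\sum_n 2^{-n}$ finishes the proof. With this corrected estimate (decay rate $\tfrac12$, not $\tfrac2q$) your argument goes through verbatim and coincides with the paper's; as written, the key inequality you rely on fails.
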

\begin{proof}
By \cite[Fact~4.1]{bccz08222222}, there are $2^i$ intervals where $\psi_i$ has positive slope $s_i = \frac{1-\gamma_i}{(1-f)\gamma_i}$, each of length $\frac{\gamma_i}{2^i}$.
Note that $|\psi_i(x) - \psi_{i+1}(x)| \leq s_{i+1} \frac{\gamma_i}{2^i}$ since the values of the two functions match at the ends of the positive-slope intervals of $\psi_i$.
Since \[s_{i+1} \frac{\gamma_i}{2^i} =\frac{1-\gamma_{i+1}}{(1-f)\gamma_{i+1}}\cdot\frac{\gamma_i}{2^i} \leq \frac{1}{1-f} \frac{\gamma_i}{\gamma_{i+1}} \frac{1}{2^i} = \frac{q}{2(1-f)} \frac{1}{2^i},\]
we know that $|\psi_i(x) - \psi_{i+1}(x)| \leq C \frac{1}{2^i}$, where $C = \frac{q}{2(1-f)}$. Therefore, if $n<m$ then
\[ |\psi_n(x) - \psi_m(x)| \leq \sum_{i=n}^{m-1} C \frac{1}{2^i} \leq \sum_{i=n}^{\infty} C \frac{1}{2^i} = C \frac{1}{2^{n-1}}.\]

This implies that the sequence is Cauchy and hence convergent.
Thus the pointwise limit
\begin{equation}
\psi(x) = \lim_{i \rightarrow \infty} \psi_i(x)\label{PSI}
\end{equation}
of this sequence of functions is well defined.
Moreover, since the bound on $|\psi_n(x) - \psi_m(x)|$ does not depend on $x$, the above argument immediately implies that the sequence of functions $\psi_i$ converges uniformly to $\psi$.
\end{proof}
\begin{remark}
  When $i$ tends to $\infty$, $\gamma_i$ tends to $0$ and the positive slope
  $s_i$ tends to $\infty$.  Thus the convergence fails in the sense of
  $W^{1,1}_{\mathrm{loc}}(\R)$, i.e., the limit function is no longer
  absolutely continuous.
\end{remark}

The limit function is a minimal valid function because 
limits preserve minimality \cite[\autoref{survey:s:limits}]{igp_survey}.
Since, in general, extremality is not preserved by limits (see again
\cite[\autoref{survey:s:limits}]{igp_survey} for a discussion), the
extremality of the limit function requires a proof that uses the detailed
structure of the functions~$\psi_n$.  The proof in~\cite{bccz08222222} extends
verbatim to our case and will not be repeated here.\smallbreak

Hence $\psi$ is a continuous (but not absolutely continuous), measurable,
non--piecewise linear, extreme ``1-slope function.''  It is available in the
Electronic Compendium as a special case of \sagefunc{bccz_counterexample}.
\begin{remark}
  The function constructed by \sagefunc{bccz_counterexample} 
  can be exactly evaluated only on the union of the closures of the open
  intervals that form the set $X^-$; for other values, the function
  will return an approximation.  This enables simple computational experiments
  with this function, but the automated minimality and extremality test is not
  available.  
\end{remark}
\begin{openquestion}
  It is an open question if computational methods can be developed that can
  assist with the discovery, construction, and extremality test of
  non--piecewise linear functions that are like \sagefunc{bccz_counterexample}
  defined as limits of piecewise linear functions.
\end{openquestion}
\section{The Electronic Compendium}
\label{s:usage}

The Electronic Compendium is implemented in Python within the framework of the
open-source computer algebra system Sage \cite{sage}.  It is available as part
of the software~\cite{infinite-group-relaxation-code}.  It can be run on a
local installation of Sage, or online via \emph{SageMathCloud}.

An overview of the available functions is given
in \autoref{tab:compendium}.  These functions are defined in the Sage files
\path{extreme_functions_in_literature.sage} and \path{survey_examples.sage}.

\autoref{tab:sage-session} shows a sample Sage session, illustrating the basic use
of the software and the help system.  The latter provides a discussion of
parameters of the extreme functions, 
bibliographic information, etc.  It is accessed by typing a function name,
followed by a question mark.  

Following the standard conventions of Sage, the documentation strings contain
usage examples with their expected output.  If Sage is invoked as \texttt{sage -t
  $\langle\mathit{filename}\rangle$.sage}, these examples are run and
their results are compared to the expected results; if the results differ,
this is reported as a unit test failure.  This helps to ensure the
consistency and correctness of the compendium as we continue to add newly
discovered functions or generalize their constructions.

\begin{table}
  \caption{A sample Sage session}
  \label{tab:sage-session}
  \tiny
  \begin{tabular}{@{}p{\linewidth}@{}}
    \toprule
    \verbatiminput{example-sage-session.txt}
    \\
    \bottomrule
  \end{tabular}
\end{table}

\clearpage 

\begin{table}
  \caption{An overview of the Electronic Compendium of extreme functions, available at
    \url{https://github.com/mkoeppe/infinite-group-relaxation-code}}
  \label{tab:compendium}
  \centering
  \begin{tabular}{@{}*5{p{.18\linewidth}}@{}}
    \toprule
    \CompendiumGridEntry{gmic}
    &\CompendiumGridEntry{gj_2_slope}
    &\CompendiumGridEntry{gj_2_slope_repeat}
    &\CompendiumGridEntry{dg_2_step_mir}
    &\CompendiumGridEntry{kf_n_step_mir}
    \\
    \CompendiumGridEntry{bccz_counterexample}
    &\CompendiumGridEntry{gj_forward_3_slope}
    &\CompendiumGridEntry{drlm_backward_3_slope}
    &\CompendiumGridEntry{dr_projected_sequential_merge_3_slope}
    &\CompendiumGridEntry{bhk_irrational}
    \\
    \CompendiumGridEntry{chen_4_slope}
    &\CompendiumGridEntry{hildebrand_5_slope_22_1}
    &\CompendiumGridEntry{ll_strong_fractional}
    &\CompendiumGridEntry{dg_2_step_mir_limit}
    &\CompendiumGridEntry{drlm_2_slope_limit}
    \\
    \CompendiumGridEntry{drlm_3_slope_limit}
    &\CompendiumGridEntry{rlm_dpl1_extreme_3a}
    &\CompendiumGridEntry{hildebrand_2_sided_discont_1_slope_1}
    &\CompendiumGridEntry{hildebrand_2_sided_discont_2_slope_1}
    &\CompendiumGridEntry{hildebrand_discont_3_slope_1}
    \\
    \bottomrule
  \end{tabular}
\end{table}


\providecommand\ISBN{ISBN }
\providecommand\CheckAccent[1]{\accent20 #1}
\providecommand{\bysame}{\leavevmode\hbox to3em{\hrulefill}\thinspace}
\providecommand{\MR}{\relax\ifhmode\unskip\space\fi MR }
\providecommand{\MRhref}[2]{%
  \href{http://www.ams.org/mathscinet-getitem?mr=#1}{#2}
}
\providecommand{\href}[2]{#2}

\end{document}
